\documentclass[10pt]{amsart}

\usepackage[T1]{fontenc}
\usepackage[utf8]{inputenc}
\usepackage{lmodern}
\usepackage{microtype}
\usepackage{mathtools}
\usepackage{amscd}
\usepackage{hyperref}

\hypersetup{hidelinks}

\newtheorem{theorem}{Theorem}[section]
\newtheorem{proposition}[theorem]{Proposition}
\newtheorem{lemma}[theorem]{Lemma}
\newtheorem{corollary}[theorem]{Corollary}
\theoremstyle{definition}

\theoremstyle{remark}
\newtheorem{remark}[theorem]{Remark}

\DeclareMathOperator{\Hom}{Hom}
\DeclareMathOperator{\End}{End}
\newcommand{\Sm}{\mathrm{Sm}}
\newcommand{\Spec}{\operatorname{Spec}}
\newcommand{\id}{\operatorname{id}}

\newcommand{\Ab}{\mathbf{Ab}}
\newcommand{\CH}{\operatorname{CH}}

\title[Dimension filtrations in birational localisation]
  {Dimension filtrations in birational localisation}

\author{David Kumallagov}
\address{}
\email{}

\subjclass[2020]{Primary 14E05; Secondary 18E35}
\keywords{birational morphism, localisation of categories, dimension filtration,
  generic degree, \(\mathbb A^1\)-invariance}

\begin{document}

\begin{abstract}
Let \(S_b\) be the class of birational morphisms between smooth
varieties over a field \(F\), and let \(L_n=S_b^{-1}d_{\leq n}\Sm(F)\).
Kahn and Sujatha asked whether the natural functor
\(L_n\to S_b^{-1}\Sm(F)\) is fully faithful. We prove that it is fully
faithful exactly for \(n=0\). More strongly, for every \(n\geq1\) and every
\(N\geq n+1\), the transition functor \(L_n\to L_N\) has an infinite fibre
on an endomorphism set. The proof identifies a sharp dimension threshold:
if \(\dim X+r\leq n\), then \(X\times\mathbb A^r\to X\) is invertible in
\(L_n\) precisely when \(\dim X+r\leq n-1\). We also give proper and
projective analogues.
\end{abstract}

\maketitle

\section{Introduction}

Let \(F\) be a field, and let \(\Sm(F)\) denote the category of smooth,
integral, separated schemes of finite type over \(F\). We silently replace
each essentially small category by a fixed small skeleton before localising;
this does not change any of the Hom-sets considered below. We use
Gabriel--Zisman localisations of categories \cite{GabrielZisman}. We write
\(S_b\) for the class of birational morphisms in \(\Sm(F)\). For
\(n\geq0\), let
\(d_{\leq n}\Sm(F)\) be the full subcategory formed by varieties of
dimension at most \(n\), and put
\[
  L_n=S_b^{-1}d_{\leq n}\Sm(F),
  \qquad
  L_\infty=S_b^{-1}\Sm(F).
\]
For \(n\leq N\leq\infty\), inclusion induces a transition functor
\[
  \rho_{n,N}\colon L_n\longrightarrow L_N.
\]

Kahn and Sujatha asked whether \(\rho_{n,\infty}\) is fully faithful
\cite[Question 8.8(3)]{KahnSujatha}. Also in
\cite[Remark 8.12]{KahnSujathaLocalisation}, the same authors stated that the transition between the
corresponding smooth projective categories is fully faithful in
characteristic zero. Our results show that the
projective assertion announced there cannot hold as stated, already for
\(n=1\); see Remark~\ref{rem:earlier-announcement}.
We answer the later question and obtain a stronger statement.

\begin{theorem}\label{thm:main}
Let \(F\) be any field.
\begin{enumerate}
\item The functor \(\rho_{0,\infty}\) is fully faithful.
\item For every \(n\geq1\) and every \(N\) with \(n+1\leq N\leq\infty\),
      the functor \(\rho_{n,N}\) is not faithful. In fact, the map
      \[
        \End_{L_n}(\mathbb A^n_F)\longrightarrow
        \End_{L_N}(\mathbb A^n_F)
      \]
      has an infinite fibre.
\end{enumerate}
Consequently, \(L_n\to L_\infty\) is fully faithful if and only if
\(n=0\).
\end{theorem}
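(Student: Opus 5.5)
The plan has three parts: a retraction onto dimension $0$ for part (1), a generic-degree invariant on $L_n$ for part (2), and the dimension threshold from the abstract to see that these endomorphisms collapse in $L_N$.

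\textbf{Part (1).} In $d_{\leq 0}\Sm(F)$ the objects are $\Spec E$ with $E/F$ finite separable, and every birational morphism between them is an isomorphism. Hence $L_0=d_{\leq 0}\Sm(F)$.

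I will build a retraction $r\colon L_N\to L_0$ for every $N\leq\infty$. For $X\in\Sm(F)$, let $E_X$ be the algebraic closure of $F$ in $F(X)$. Since $X$ is normal, $E_X\subset\mathcal O(X)$, so there is a morphism $\eta_X\colon X\to\Spec E_X$. This morphism is natural in $X$, and a birational morphism induces an isomorphism on $E_X$. So $X\mapsto\Spec E_X$ descends to a functor $r\colon L_N\to L_0$, and $\eta$ is a natural transformation $\id\Rightarrow\rho_{0,N}\circ r$. Naturality survives localisation because it holds on generators.

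For $X=\Spec E$ we have $\eta_X=\id$, so $r\rho_{0,N}\cong\id$, which gives faithfulness. For fullness, take any $f\colon\Spec E\to\Spec E'$ in $L_N$. Naturality gives $\eta_{E'}\circ f=\rho(r f)\circ\eta_E$ with both $\eta$'s identities, so $f=\rho(rf)$. The same argument also shows $\End_{L_N}(\Spec F)=\{\id\}$ for every $N$, which part (2) will use.

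\textbf{Part (2), invariant on $L_n$.} Let $\mathcal C$ be the category with two objects, $\top$ and $\bot$, where:
\begin{itemize}
\item $\End(\top)=(\mathbb Z_{\geq0},\cdot)$;
\item $\End(\bot)=\{\id\}$;
\item $\Hom(\top,\bot)$ and $\Hom(\bot,\top)$ are singletons, and every composite through $\bot$ from $\top$ to $\top$ is $0$.
\end{itemize}
Define $D\colon d_{\leq n}\Sm(F)\to\mathcal C$ by sending $n$-dimensional varieties to $\top$ and all others to $\bot$. A morphism $f\colon X\to Y$ with $\dim X=\dim Y=n$ goes to $\deg f$ if $f$ is dominant, and to $0$ otherwise.

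Functoriality is the key check. If $g\circ f\colon X\to Z$ with $\dim X=\dim Z=n$ is dominant, then the middle object $Y$ has dimension $n$, because $\dim Y\leq n$ and $Y$ dominates $Z$. Moreover $f$ and $g$ are both dominant, and the degree of generically finite dominant maps is multiplicative. If $g\circ f$ is not dominant, then either some factor is non-dominant or $Y$ has dimension less than $n$, and in both cases the image of $g\circ f$ is $0$.

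Birational morphisms have degree $1$, so $D$ inverts $S_b$ and factors through $L_n$. The endomorphisms
\[
f_k\colon\mathbb A^n\to\mathbb A^n,\qquad (x_1,\dots,x_n)\mapsto(x_1^k,x_2,\dots,x_n),\qquad k\geq1,
\]
have $D(f_k)=k$, so they are pairwise distinct in $\End_{L_n}(\mathbb A^n)$.

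\textbf{Part (2), collapse in $L_N$.} For $N\geq n+1$, apply the threshold statement announced in the abstract with $X=\Spec F$ and $r=n$. Since $0+n\leq N-1$, the structure map $\pi\colon\mathbb A^n\to\Spec F$ is invertible in $L_N$. Therefore
\[
\End_{L_N}(\mathbb A^n)\cong\End_{L_N}(\Spec F)=\{\id\}
\]
by part (1). So all the $f_k$ lie in a single fibre of $\End_{L_n}(\mathbb A^n)\to\End_{L_N}(\mathbb A^n)$, which is therefore infinite. The final "consequently" clause then follows immediately from (1) and (2).

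The main obstacle is the invertibility of $\mathbb A^n\to\Spec F$ in $L_N$ for $N\geq n+1$. If I have to prove it rather than cite it, I will show that the section $s\colon\Spec F\to\mathbb A^n$ at the origin satisfies $s\pi=\id$ in $L_N$. The tool is the homotopy $(x,t)\mapsto tx$ on $\mathbb A^n\times\mathbb A^1$, which has dimension $n+1\leq N$. I must also show that the $0$- and $1$-sections of $\mathbb A^n\times\mathbb A^1\to\mathbb A^n$ agree in $L_N$. For this I would pass to $\mathbb A^n\times\mathbb P^1$ and use that $\mathbb P^1$ is rational, mimicking Kahn--Sujatha's argument while checking that it only uses varieties of dimension at most $n+1$.
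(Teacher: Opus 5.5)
Your proof is correct, given the threshold theorem (Theorem~\ref{thm:threshold}), which the paper's own proof also relies on. It differs from the paper in two places.

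\textbf{Part~(1).} The paper first identifies $L_0=d_{\le0}\Sm(F)$. It then uses the Kahn--Sujatha formula $\Hom_{L_\infty}(\Spec K,V)\simeq V(K)/R$ for smooth proper $V$, and notes that $R$-equivalence is trivial when $\dim V=0$. Your constant-field retraction $X\mapsto\Spec E_X$, with unit $\eta_X$, is elementary, avoids that Hom formula, and works uniformly for every $N\le\infty$. Two points should be stated explicitly:
\begin{itemize}
\item $E_X/F$ is finite \emph{separable}, because $F(X)/F$ is separable for smooth $X$. This is what puts $\Spec E_X$ in $d_{\le0}\Sm(F)$ in positive characteristic.
\item The map $E_Y\to E_X$ comes from $f^*$ on global sections, so non-dominant $f$ are covered too.
\end{itemize}

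\textbf{Part~(2).} Your $D$ is the paper's $\Delta_n$ repackaged, but your collapse step is different.
\begin{itemize}
\item The paper only inverts $p\colon\mathbb A^n\to\mathbb A^{n-1}$ in $L_{n+1}$ and uses $p\mu_m=p$. This transfers verbatim to $Z\times\mathbb A^1$, and to $\mathbb P^{n-1}\times\mathbb P^1$ in the proper and projective settings.
\item You invert $\mathbb A^n\to\Spec F$ and combine this with part~(1) to get $\End_{L_N}(\mathbb A^n)=\{\id\}$. That is a stronger conclusion: all of $\End_{L_n}(\mathbb A^n)$ is a single fibre.
\end{itemize}
For $N=\infty$, apply the threshold at level $n+1$ and push forward along $\rho_{n+1,\infty}$.

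\textbf{Caveat on your fallback.} Your sketch for proving the threshold step yourself would fail exactly at the critical level $N=n+1$. The Colliot-Th\'el\`ene argument applied to $\mathbb A^n\times\mathbb P^1\to\mathbb A^n$ needs $\mathbb A^n\times W$ with $W$ a surface, which has dimension $n+2$. Moreover, that argument proves invertibility of this projection, and the projection is \emph{not} invertible in $L_{n+1}$: $\overline\Delta_{n+1}$ is $\mathbb Z$ on $\mathbb A^n\times\mathbb P^1$ and $0$ on $\mathbb A^n$. So the route through the homotopy $(x,t)\mapsto tx$ on $\mathbb A^n\times\mathbb A^1$ cannot be kept within dimension $n+1$ this way.

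The fix is to contract one coordinate at a time, over bases of dimension at most $n-1$. For $1\le j\le n$:
\begin{itemize}
\item $\mathbb A^{j-1}\times\mathbb P^1\to\mathbb A^{j-1}$ is invertible, using $\mathbb A^{j-1}\times W$, which has dimension $j+1\le n+1$;
\item $\mathbb A^j\subset\mathbb A^{j-1}\times\mathbb P^1$ is birational.
\end{itemize}
Composing these steps gives $\mathbb A^n\to\Spec F$ invertible in $L_{n+1}$, which is exactly the paper's proof of Theorem~\ref{thm:threshold}.
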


\begin{remark}\label{rem:earlier-announcement}
This also shows that the assertion announced without proof in
\cite[Remark 8.12]{KahnSujathaLocalisation} cannot hold as stated. Indeed, in
the birational localisation of
smooth projective varieties of dimension at most one, birational morphisms of
smooth projective curves are already isomorphisms, so the self-maps
\([u:v]\mapsto [u^m:v^m]\) of \(\mathbb P^1\) remain pairwise distinct. After
allowing smooth projective surfaces, the Colliot--Th\'el\`ene surface argument
makes \(\mathbb P^1\to\operatorname{Spec}F\) invertible, and all these
self-maps become the identity. Thus the dimension-one to dimension-two
transition is not faithful over any field.
\end{remark}

The mechanism is elementary. The category \(L_n\) admits a
``top-dimensional degree'' functor which vanishes below dimension \(n\)
and sends a dominant endomorphism of an \(n\)-fold to its total generic
degree. This detects morphisms which cannot yet be identified inside
\(L_n\). The upper bound in our dimension threshold is obtained by keeping
track of dimensions in the surface argument of Colliot-Th\'el\`ene
\cite[Appendix A]{KahnSujatha}; the degree functor proves that this bound is
optimal.

More precisely, we prove the following sharp form of homotopy invariance.

\begin{theorem}\label{thm:threshold-intro}
Let \(X\in\Sm(F)\) have dimension \(d\), and let \(r\geq1\) and
\(d+r\leq n\). The projection
\[
  X\times\mathbb A^r\longrightarrow X
\]
is an isomorphism in \(L_n\) if and only if \(d+r\leq n-1\).
\end{theorem}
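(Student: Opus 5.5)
Two directions. For the ``if'' direction (\(d+r\leq n-1\)), I would reduce to \(r=1\) by factoring \(X\times\mathbb A^r\to X\times\mathbb A^{r-1}\to\cdots\to X\); each step is of the form \(Y\times\mathbb A^1\to Y\) with \(\dim Y+1\leq n-1\). So it suffices to show that \(p\colon Y\times\mathbb A^1\to Y\) is invertible in \(L_n\) whenever \(\dim Y\leq n-2\). Here I would run the Colliot-Th\'el\`ene surface argument while tracking dimensions. Consider \(Y\times\mathbb A^2\), of dimension \(\leq n\). The maps \(Y\times\mathbb A^1\rightrightarrows Y\times\mathbb A^1\) given by \(t\mapsto t\) and \(t\mapsto 0\) should be identified in \(L_n\). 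To do this, use the multiplication map \(\mu\colon Y\times\mathbb A^2\to Y\times\mathbb A^1\), \((y,s,t)\mapsto(y,st)\). Restrict it along the sections \(s=1\) and \(s=0\). Neither restriction is inverted directly, so I would use the blow-up of \(Y\times\mathbb A^2\) at \(Y\times\{0\}\), which is a birational morphism in \(d_{\leq n}\Sm(F)\). On the blow-up, \(\mu\) and the relevant sections can be compared through the exceptional divisor \(Y\times\mathbb P^1\); then a two-line trick shows that the sections \(Y\times\mathbb A^1\to Y\times\mathbb A^2\) given by \(s=1\) and \(s=0\) agree after composing with the birational blow-down. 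This yields \(\mathrm{id}=i_0\circ p\) in \(L_n\), and since \(p\circ i_0=\mathrm{id}\), \(p\) is invertible. Every variety used has dimension at most \(\dim Y+2\leq n\), which is exactly where the hypothesis enters.

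For the ``only if'' direction (\(d+r=n\)), I would build the top-dimensional degree functor \(\deg_n\colon L_n\to(\mathbb Z_{\geq0},\cdot)\), or into a suitable monoid viewed as a one-object category. On \(d_{\leq n}\Sm(F)\), send \(f\colon Z\to W\) to its generic degree \([F(Z):F(W)]\) if \(\dim Z=\dim W=n\) and \(f\) is dominant, and to \(0\) otherwise. I need to check functoriality. For a composite \(Z\to W\to V\) with \(\dim Z=\dim V=n\), if the composite is dominant then \(W\) must have dimension \(n\) and both factors must be dominant. Degrees multiply in towers of function fields. If either factor fails to be dominant, or \(\dim W<n\), then the composite is not dominant between \(n\)-folds, so both sides are \(0\). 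Morphisms whose source or target has dimension \(<n\) always get \(0\), and this is compatible since \(0\) is absorbing. Birational morphisms between \(n\)-folds have degree \(1\), but birational morphisms between lower-dimensional varieties get \(0\), which is not invertible. So I would replace the target by the monoid \(\mathbb Z_{\geq0}\) and let every object of dimension \(<n\) carry the zero endomorphism as identity. Concretely, the target is a category with objects ``top'' and ``low'', \(\Hom(\text{top},\text{top})=\mathbb Z_{\geq1}\cup\{0\}\), and all other Hom-sets equal to \(\{0\}\). Birational maps then go to identities, and the functor factors through \(L_n\).

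Now suppose \(p\colon X\times\mathbb A^r\to X\) were invertible with \(\dim X<n\). Then \(\mathrm{id}_{X\times\mathbb A^r}\) factors through \(X\) in \(L_n\), so its image factors through ``low'' and equals \(0\). But \(\mathrm{id}\) of an \(n\)-fold has degree \(1\), a contradiction. The case \(d=0\), with \(X\) a point of dimension \(0\leq n-1\), is covered the same way. The main obstacle is the first direction: making the blow-up argument precise and confirming that only varieties of dimension \(\leq n\) appear. The degree functor is routine once the two-object target category is chosen correctly.
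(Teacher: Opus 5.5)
Your reduction to \(r=1\) and your ``only if'' argument are correct and agree with the paper. Your two-object target category is the paper's \(\Ab\)-valued \(\Delta_n\) in disguise: there, lower-dimensional objects go to the zero group, whose identity is the zero map, so the invertibility worry you raise never arises.

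The gap is in the ``if'' direction, at the step that carries all its content: showing that \(Y\times\mathbb A^1\to Y\) is invertible in \(L_n\) when \(\dim Y\leq n-2\). The sentence ``a two-line trick shows that the sections \ldots\ agree after composing with the birational blow-down'' is not an argument. Read literally it does not parse, since the blow-down \(\pi\colon B\to Y\times\mathbb A^2\) and the sections \(\sigma_0,\sigma_1\colon Y\times\mathbb A^1\to Y\times\mathbb A^2\) have the same target. Under any reading, \(\pi\) is invertible in \(L_n\), so what you need is \(\sigma_0=\sigma_1\) in \(L_n\), or at least \(\mu\sigma_0=\mu\sigma_1\). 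That is equivalent to the statement you are trying to prove. Nor can it come from the usual cylinder argument. The maps \(\sigma_0,\sigma_1\) are sections of \(Y\times\mathbb A^2\to Y\times\mathbb A^1\), and when \(\dim Y=n-2\) this projection is \emph{not} invertible in \(L_n\), by your own ``only if'' direction. So the homotopy \(\mu\) is a detour. What is missing is a mechanism that actually makes a projective line contractible.

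The paper supplies this in Lemma~\ref{lem:P1} via Colliot-Th\'el\`ene's surface \(W\). There a line is contracted to a point in one birational model and maps isomorphically onto a ruling, which is a retract, in another. Your own blow-up already suffices if you use it for this purpose instead.

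Let \(B=\mathrm{Bl}_{Y\times0}(Y\times\mathbb A^2)\). It is the total space of \(\mathcal O(-1)\) over its exceptional divisor \(E=Y\times\mathbb P^1\), so the bundle projection retracts the inclusion \(e\colon E\hookrightarrow B\). Put \(\Phi_T=\Hom_{L_n}(-,T)\). Then \(e^*\) is surjective and \(\pi^*\) is bijective. Moreover \(\pi e=j_0q\), where \(q\colon Y\times\mathbb P^1\to Y\) is the projection and \(j_0\) is the zero section.

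Hence \(q^*\) is surjective, and it is injective because \(q\) has a section. By Yoneda, \(q\) is invertible in \(L_n\). Then \(Y\times\mathbb A^1\to Y\) is invertible too, because \(Y\times\mathbb A^1\hookrightarrow Y\times\mathbb P^1\) is birational. Every variety involved has dimension at most \(\dim Y+2\leq n\). This is the paper's proof with \(W\) replaced by the simpler surface \(\mathrm{Bl}_0\mathbb A^2\).
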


Thus the projection first becomes invertible one step after the dimension
of its source. This exact threshold explains why every consecutive map
\(L_n\to L_{n+1}\) already fails to be faithful. We also prove the same
phenomenon in the smooth proper and smooth projective subcategories; there
the degree detector is simply the top-dimensional Chow group. The full
birational localisation and its relation to \(R\)-equivalence and
\(\mathbb A^1\)-homotopy have recently been revisited in
\cite{CisinskiKahn,Koizumi}; the point here is the unstable behaviour caused
by imposing a dimension bound.

\section{The top-dimensional degree}

We first construct the obstruction used throughout the paper.

\begin{lemma}\label{lem:degree}
For every \(n\geq1\), there is a functor
\[
  \Delta_n\colon d_{\leq n}\Sm(F)\longrightarrow\Ab
\]
defined on objects by
\[
  \Delta_n(X)=
  \begin{cases}
    \mathbb Z,&\dim X=n,\\
    0,&\dim X<n,
  \end{cases}
\]
and on a morphism \(f\colon X\to Y\) as follows. If \(X\) and \(Y\) both
have dimension \(n\) and \(f\) is dominant, then \(f\) is generically
finite and \(\Delta_n(f)\) is multiplication by the total field degree
\([F(X):F(Y)]\). In all other cases \(\Delta_n(f)=0\).

The functor \(\Delta_n\) sends every birational morphism to an
isomorphism. Hence it factors uniquely through a functor
\[
  \overline\Delta_n\colon L_n\longrightarrow\Ab.
\]
\end{lemma}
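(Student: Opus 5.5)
To prove the lemma, I will first show that \(\Delta_n\) is well defined, then that it is a functor, and finally that it inverts \(S_b\). The factorisation through \(L_n\) then follows from the universal property of the Gabriel--Zisman localisation \cite{GabrielZisman}.

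\emph{Well-definedness.} Let \(f\colon X\to Y\) be dominant with \(\dim X=\dim Y=n\), and let \(\eta_X\) be the generic point of \(X\). Dominance means \(f(\eta_X)=\eta_Y\), so \(f\) induces an extension \(F(Y)\subset F(X)\) of finitely generated field extensions of \(F\). Both fields have transcendence degree \(n\) over \(F\), so the extension is algebraic and finitely generated, hence finite. Consequently \(f\) is generically finite and \([F(X):F(Y)]\) is a positive integer. Every other morphism is sent to \(0\), which is the unique map whenever the source or target group is \(0\).

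\emph{Functoriality.} Identities are dominant of degree \(1\). Let \(X\xrightarrow{f}Y\xrightarrow{g}Z\) be composable in \(d_{\leq n}\Sm(F)\). I split into cases.
\begin{enumerate}
\item If all three of \(X,Y,Z\) have dimension \(n\) and both \(f\) and \(g\) are dominant, then \(g\circ f\) is dominant. Multiplicativity of degrees in the tower \(F(Z)\subset F(Y)\subset F(X)\) gives \(\Delta_n(g\circ f)=\Delta_n(g)\circ\Delta_n(f)\).
\item Otherwise the right-hand side \(\Delta_n(g)\circ\Delta_n(f)\) is \(0\), and I must show \(\Delta_n(g\circ f)=0\). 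This is automatic if \(\dim X<n\) or \(\dim Z<n\).
\item The remaining situation is \(\dim X=\dim Z=n\) with \(g\circ f\) dominant. In this situation I claim that \(\dim Y=n\) and that \(f\) and \(g\) are both dominant, so we are back in the first case. Indeed, \(g\circ f\) dominant forces \(g\) to be dominant, since \(\overline{g(Y)}\supseteq\overline{g(f(X))}=Z\). A dominant morphism of integral varieties satisfies \(\dim Y\geq\dim Z=n\), and \(Y\in d_{\leq n}\Sm(F)\) gives \(\dim Y\le n\), so \(\dim Y=n\). Finally, \(\overline{f(X)}\) is an irreducible closed subset of \(Y\) that maps dominantly onto \(Z\), so \(\dim\overline{f(X)}\geq n=\dim Y\). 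Since \(Y\) is irreducible, this forces \(\overline{f(X)}=Y\), so \(f\) is dominant.
\end{enumerate}
This dimension-counting case is the only step requiring care, and it is where the restriction to \(d_{\leq n}\Sm(F)\) is essential: if \(Y\) were allowed to have dimension \(>n\), the composite could be dominant while the factors are not both degree-detecting.

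\emph{Inverting \(S_b\).} A birational morphism \(f\colon X\to Y\) is dominant and induces an isomorphism \(F(Y)\cong F(X)\); in particular \(\dim X=\dim Y\). If this common dimension is \(n\), then \(\Delta_n(f)=1\). If it is \(<n\), then \(\Delta_n(f)\) is the identity of the zero group. In both cases \(\Delta_n(f)\) is an isomorphism, and the universal property of \(S_b^{-1}\) gives the unique factorisation \(\overline\Delta_n\colon L_n\to\Ab\).
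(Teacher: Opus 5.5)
Your proof is correct and follows the paper's route: the paper simply calls functoriality ``obvious'' and then invokes the universal property of the localisation. You supply the omitted details, notably the dimension count showing that if \(g\circ f\) is dominant between \(n\)-folds, then the middle object has dimension \(n\) and both factors are dominant, which is exactly where the restriction to \(d_{\leq n}\Sm(F)\) is used.
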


\begin{proof}
The first part is obvious. The rest follows from the universal property of localization.
\end{proof}

\begin{corollary}\label{cor:degree-separates}
Let \(f,g\colon X\to Y\) be morphisms between smooth \(n\)-folds. If their
generic degrees are different (where a non-dominant morphism is assigned
degree zero), then \(f\) and \(g\) define different morphisms in \(L_n\).
\end{corollary}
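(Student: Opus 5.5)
The plan is to apply the functor \(\overline\Delta_n\) of Lemma~\ref{lem:degree} and compare the induced endomorphisms of \(\mathbb Z\). Let \(X\) and \(Y\) be smooth \(n\)-folds, so both are objects of \(d_{\leq n}\Sm(F)\) and \(\Delta_n(X)=\Delta_n(Y)=\mathbb Z\). By the definition of \(\Delta_n\), a morphism \(f\colon X\to Y\) in \(d_{\leq n}\Sm(F)\) is sent as follows:
\begin{itemize}
\item if \(f\) is dominant, \(\Delta_n(f)\) is multiplication by \([F(X):F(Y)]\);
\item if \(f\) is not dominant, \(\Delta_n(f)=0\), which is multiplication by the assigned degree zero.
\end{itemize}
So in every case \(\Delta_n(f)\) is multiplication by the generic degree \(\deg f\), in the convention of the statement.

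Now argue by contradiction. Suppose \(f\) and \(g\) have the same image in \(L_n\) under the localisation functor \(Q\colon d_{\leq n}\Sm(F)\to L_n\). Since \(\Delta_n=\overline\Delta_n\circ Q\), we get
\[
\Delta_n(f)=\overline\Delta_n(Q f)=\overline\Delta_n(Q g)=\Delta_n(g).
\]
These are the maps \(\mathbb Z\to\mathbb Z\) given by multiplication by \(\deg f\) and by \(\deg g\). Evaluating both at \(1\) gives \(\deg f=\deg g\), contradicting the hypothesis. Hence \(Qf\neq Qg\).

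There is no real obstacle here. The substantive input is that \(\Delta_n\) is a well-defined functor, in particular that generic degrees are multiplicative under composition, and that it inverts birational morphisms. Both facts are part of Lemma~\ref{lem:degree}, which we assume.
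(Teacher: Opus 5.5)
Your proposal is correct and is essentially the argument the paper intends: the paper states the corollary without a separate proof, as an immediate consequence of Lemma~\ref{lem:degree}, by applying \(\overline\Delta_n\) and reading off the generic degree from the induced endomorphism of \(\mathbb Z\). You also correctly check that the degree-zero convention for non-dominant maps matches \(\Delta_n(f)=0\), which is what makes the comparison work in every case.
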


The next lemma keeps track of the dimension used in the classical argument
which makes \(\mathbb P^1\) birationally contractible.

\begin{lemma}\label{lem:P1}
Let \(X\in\Sm(F)\) have dimension \(d\). The projection
\[
  q_X\colon X\times\mathbb P^1\longrightarrow X
\]
is an isomorphism in \(L_{d+2}\).

If \(X\) is proper (respectively projective), the same assertion holds in
the localisation of smooth proper (respectively smooth projective),
varieties of dimension at most \(d+2\).
\end{lemma}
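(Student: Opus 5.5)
The plan is to follow Colliot-Thélène's surface argument from \cite[Appendix A]{KahnSujatha}, checking at each step that every variety involved has dimension at most \(d+2\). Fix a rational point \(0\in\mathbb P^1(F)\) and let \(s_0\colon X\to X\times\mathbb P^1\), \(x\mapsto(x,0)\), be the corresponding section. Then \(q_X\circ s_0=\id_X\) already in \(d_{\leq d+1}\Sm(F)\). So it suffices to prove \(s_0\circ q_X=\id\) in \(L_{d+2}\), since then \(s_0\) is a two-sided inverse of \(q_X\). I will produce this equality as the composite of a zig-zag whose backward arrows are birational morphisms between smooth varieties of dimension \(\leq d+2\); such arrows are invertible in \(L_{d+2}\) by definition.

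\emph{Construction.} Let \(B\) be the blow-up of \(X\times\mathbb P^1\times\mathbb P^1\) (coordinates \((x,t,u)\)) along the two disjoint smooth codimension-two centres \(X\times\{(0,\infty)\}\) and \(X\times\{(\infty,0)\}\). Then \(B\) is smooth of dimension \(d+2\), and \(\pi\colon B\to X\times\mathbb P^1\times\mathbb P^1\) lies in \(S_b\), so it is invertible in \(L_{d+2}\). The rational map \((x,t,u)\mapsto(x,tu)\) becomes a morphism \(m\colon B\to X\times\mathbb P^1\). Next take the strict transforms of \(X\times\mathbb P^1\times\{1\}\) and \(X\times\mathbb P^1\times\{0\}\). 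In each case the centres meet the divisor in a Cartier divisor of it, or not at all, so each strict transform is isomorphic to \(X\times\mathbb P^1\). This gives two lifts \(j_1,j_0\colon X\times\mathbb P^1\to B\) with \(m\circ j_1=\id\) and \(m\circ j_0=s_0\circ q_X\). The last equality holds because both sides are morphisms to a separated scheme that agree on a dense open set. Hence everything reduces to showing \(j_0=j_1\) in \(L_{d+2}\). Since \(\pi\) is invertible there, this is equivalent to \(i_0=i_1\), where \(i_c\colon X\times\mathbb P^1\to X\times\mathbb P^1\times\mathbb P^1\) is the section \(u=c\). In the proper and projective variants the same construction applies verbatim, because blow-ups along smooth centres preserve properness and projectivity.

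\emph{Main obstacle.} The equality \(i_0=i_1\) is exactly where the argument can become circular. It would follow from invertibility of the projection \(X\times\mathbb P^1\times\mathbb P^1\to X\times\mathbb P^1\), but that is a statement in dimension \(d+3\) and cannot be used. Here I intend to use Colliot-Thélène's genuinely surface-theoretic input. Blowing up \(\mathbb P^1\times\mathbb P^1\) at one point gives the same surface as blowing up \(\mathbb P^2\) at two points. Over \(X\), this identifies \(\mathrm{Bl}_{X\times\mathrm{pt}}(X\times\mathbb P^1\times\mathbb P^1)\), which has dimension \(d+2\), with a two-point blow-up of \(X\times\mathbb P^2\). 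In \(X\times\mathbb P^2\), the relevant sections \(u=0\) and \(u=1\) become lines through a common blown-up point, and these lines are interchanged by an automorphism of \(X\times\mathbb P^2\) fixing enough structure. I would first try to show, inside this \((d+2)\)-dimensional zig-zag, that \(i_0\) and \(i_1\) are conjugate by an automorphism. That automorphism is built from \(t\mapsto 1/t\) and the identification above, and it becomes the identity after composing with the already-invertible blow-down maps.

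All varieties used have dimension \(d\), \(d+1\) or \(d+2\). I expect the bookkeeping of this final identification, namely checking that no auxiliary variety of dimension \(d+3\) is needed, to be the delicate step. It is also precisely the point where Lemma~\ref{lem:degree} shows that \(d+2\) cannot be lowered to \(d+1\).
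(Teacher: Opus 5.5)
Your construction of \(B\), \(m\), \(j_0,j_1\) is correct, and so is the reduction of \(s_0q_X=\id\) to \(i_0=i_1\) in \(L_{d+2}\). However, this reduction gains nothing, because \(i_0=i_1\) in \(L_{d+2}\) is equivalent to the lemma itself. Indeed, suppose \(q_X\) is invertible, and let \(s_1\) be the section at \(1\). Then \(s_0=s_1=q_X^{-1}\). With \(k\colon(x,u)\mapsto(x,0,u)\) one gets \(i_0s_0=ks_0=ks_1=i_1s_0\), hence \(i_0=i_1\). So the whole content sits in the step you leave open, and the route you sketch for it cannot work. Conjugacy by an automorphism carries no information: \(\alpha=\id_{X\times\mathbb P^1}\times(u\mapsto u+1)\) already satisfies \(\alpha i_0=i_1\). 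Moreover, a non-identity automorphism \(\alpha\) of a \((d+2)\)-fold \(Y\), such as an automorphism of \(X\times\mathbb P^2\), is never the identity in \(L_{d+2}\). To see this, take the contravariant functor sending a \((d+2)\)-fold \(Z\) to \(\mathbb Z[\Hom_F(F(Y),F(Z))]\) and lower-dimensional objects to \(0\). Let dominant maps between \((d+2)\)-folds act by pull-back, and send all other maps to \(0\). This functor inverts birational morphisms, and it moves the basis element \(\id_{F(Y)}\) to \(\alpha^*\neq\id_{F(Y)}\). So your claim that the automorphism ``becomes the identity'' after composing with invertible blow-downs is false. What you would need is \(\alpha i_0=i_0\) in \(L_{d+2}\), which is the original problem again.

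The missing idea is how the surface \(W\) is used. The point is not to move sections. Instead, take one exceptional curve \(E\simeq\mathbb P^1\) which \(\beta\colon W\to\mathbb P^2\) contracts to a rational point \(M\), but which \(\gamma\colon W\to\mathbb P^1\times\mathbb P^1\) maps isomorphically onto a ruling \(L\). Multiply by \(X\), keeping the names \(\beta,\gamma\), and let \(e\colon X\times E\hookrightarrow X\times W\) be the inclusion. Choose the projection \(\mathrm{pr}\colon X\times\mathbb P^1\times\mathbb P^1\to X\times\mathbb P^1\) so that \(\phi=\mathrm{pr}\,\gamma\,e\) is an isomorphism. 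Then in \(L_{d+2}\)
\[
  \id_{X\times E}=\phi^{-1}\,\mathrm{pr}\,\gamma\,\beta^{-1}\,(\beta e),
\]
and \(\beta e\) factors through \(X\times\{M\}\cong X\) via the projection \(X\times E\to X\). So this projection has a left inverse. It also has a section, hence it is an isomorphism. This is the paper's Yoneda argument with \(\Phi_T=\Hom_{L_{d+2}}(-,T)\). It uses only varieties of dimension at most \(d+2\), and it makes \(B\) and the detour through \(i_0=i_1\) unnecessary.
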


\begin{proof}
We repeat the arguments of Colliot-Th\'el\`ene
\cite[Appendix A]{KahnSujatha}. Let \(W\) be the smooth projective surface
which is simultaneously the blow-up of \(\mathbb P^1\times\mathbb P^1\)
at an \(F\)-rational point and the blow-up of \(\mathbb P^2\) at two
\(F\)-rational points. Thus there are birational morphisms
\[
  W\longrightarrow\mathbb P^1\times\mathbb P^1,
  \qquad
  W\longrightarrow\mathbb P^2.
\]
One of the curves contracted by \(W\to\mathbb P^2\), say
\(E\simeq\mathbb P^1\), is mapped isomorphically by
\(W\to\mathbb P^1\times\mathbb P^1\) onto a ruling
\(L\simeq\mathbb P^1\), and is contracted to an \(F\)-rational point
of \(\mathbb P^2\).

Multiply this configuration by \(X\). All objects obtained are smooth and
have dimension at most \(d+2\), while the two displayed morphisms remain
birational. Let \(T\) be any object of \(L_{d+2}\), and put
\[
  \Phi_T(-)=\Hom_{L_{d+2}}(-,T).
\]
The inclusion of the ruling \(X\times L\) into
\(X\times\mathbb P^1\times\mathbb P^1\) induces a surjection on \(\Phi_T\):
one of the two projections
\[
  X\times\mathbb P^1\times\mathbb P^1\longrightarrow
  X\times\mathbb P^1
\]
restricts to an isomorphism on \(X\times L\), and therefore supplies a
right inverse after applying the contravariant functor \(\Phi_T\). Since
\(X\times W\to X\times\mathbb P^1\times\mathbb P^1\) is birational and
\(X\times E\to X\times L\) is an isomorphism, the resulting commutative
square shows that
\[
  \Phi_T(X\times W)\longrightarrow\Phi_T(X\times E)
\]
is surjective.

The birational morphism \(X\times W\to X\times\mathbb P^2\) makes
\(\Phi_T(X\times\mathbb P^2)\to\Phi_T(X\times W)\) a bijection. Hence the
composite
\[
  \Phi_T(X\times\mathbb P^2)\longrightarrow
  \Phi_T(X\times W)\longrightarrow
  \Phi_T(X\times E)
\]
is surjective. On the other hand, the commutative diagram
\[
\begin{CD}
  X\times E @>>> X\times W\\
  @VVV              @VVV\\
  X\times\{M_1\} @>>> X\times\mathbb P^2
\end{CD}
\]
shows that this composite factors as
\[
  \Phi_T(X\times\mathbb P^2)\longrightarrow
  \Phi_T(X\times\{M_1\})\longrightarrow
  \Phi_T(X\times E).
\]
It follows that
\[
  \Phi_T(X)\longrightarrow\Phi_T(X\times E)
\]
is surjective, because \(X\times\{M_1\}\simeq X\). It is also injective:
the projection \(X\times E\to X\) has a section, so the induced map on
\(\Phi_T\) has a left inverse. Thus it is a bijection for every \(T\).
Since \(E\simeq\mathbb P^1\), Yoneda's lemma shows that \(q_X\) is an
isomorphism.

The same proof works in
the proper and projective subcategories.
\end{proof}

\begin{theorem}\label{thm:threshold}
Let \(X\in\Sm(F)\) have dimension \(d\), and let \(r\geq1\) with
\(d+r\leq n\). Then the projection
\[
  p_{X,r}\colon X\times\mathbb A^r\longrightarrow X
\]
is an isomorphism in \(L_n\) if and only if \(d+r\leq n-1\).
\end{theorem}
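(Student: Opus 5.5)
Two directions.

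\emph{Non-invertibility when \(d+r=n\).} Here both \(X\times\mathbb A^r\) and \(X\) lie in \(d_{\leq n}\Sm(F)\), but only the source has dimension \(n\). Applying the functor \(\overline\Delta_n\) of Lemma~\ref{lem:degree}, the projection \(p_{X,r}\) goes to a map \(\mathbb Z\to 0\). An isomorphism in \(L_n\) would go to an isomorphism of abelian groups, and \(\mathbb Z\not\cong 0\). So \(p_{X,r}\) is not invertible in \(L_n\). This direction is immediate.

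\emph{Invertibility when \(d+r\leq n-1\).} First treat \(r=1\) with \(d+1\leq n-1\), i.e.\ \(d+2\leq n\). By Lemma~\ref{lem:P1}, \(q_X\colon X\times\mathbb P^1\to X\) is an isomorphism in \(L_{d+2}\). Since \(\rho_{d+2,n}\) is a functor, it is also an isomorphism in \(L_n\). Next consider the open immersion \(j\colon X\times\mathbb A^1\hookrightarrow X\times\mathbb P^1\). It is birational between smooth varieties of dimension \(d+1\leq n\), so it lies in \(S_b\) and is invertible in \(L_n\). Since \(p_{X,1}=q_X\circ j\), the projection \(p_{X,1}\) is an isomorphism in \(L_n\).

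For general \(r\), factor
\[
  p_{X,r}\colon X\times\mathbb A^r\to X\times\mathbb A^{r-1}\to\cdots\to X\times\mathbb A^1\to X,
\]
where the \(k\)-th map is \(p_{X\times\mathbb A^{k-1},1}\). Its source \(X\times\mathbb A^{k-1}\) has dimension \(d+k-1\), and \((d+k-1)+1=d+k\leq d+r\leq n-1\). So the case \(r=1\) applies to each factor, and \(p_{X,r}\) is a composite of isomorphisms in \(L_n\).

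The only point needing care is bookkeeping. Every object used, including \(X\times\mathbb A^{k-1}\times\mathbb P^1\) of dimension \(d+k\leq n-1\) and the auxiliary \((d+k+1)\)-folds \(X\times\mathbb A^{k-1}\times W\), \(X\times\mathbb A^{k-1}\times\mathbb P^1\times\mathbb P^1\) and \(X\times\mathbb A^{k-1}\times\mathbb P^2\) from the proof of Lemma~\ref{lem:P1}, must have dimension at most \(n\). This holds since \(d+k+1\leq n\). This is exactly where the hypothesis \(d+r\leq n-1\) is used, and where the degree obstruction shows it cannot be weakened.
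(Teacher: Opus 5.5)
Your proof is correct and follows the paper's argument essentially verbatim. For \(d+r=n\) you use \(\overline\Delta_n\), which sends the projection to a map \(\mathbb Z\to 0\); for \(d+r\leq n-1\) you apply Lemma~\ref{lem:P1} to \(X\times\mathbb A^{k-1}\), push it into \(L_n\) via \(\rho\), compose with the birational open immersion \(X\times\mathbb A^{k}\hookrightarrow X\times\mathbb A^{k-1}\times\mathbb P^1\) and iterate, with the same dimension count \(d+k+1\leq n\) (the only slip is wording: \(X\times\mathbb A^{k-1}\) is the target, not the source, of the \(k\)-th projection).
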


\begin{proof}
Assume first that \(d+r\leq n-1\). For \(1\leq j\leq r\), put
\(X_j=X\times\mathbb A^{j-1}\). Then
\(\dim X_j=d+j-1\), and Lemma \ref{lem:P1} shows that
\[
  X_j\times\mathbb P^1\longrightarrow X_j
\]
is invertible in \(L_{d+j+1}\), hence in \(L_n\), because
\(d+j+1\leq d+r+1\leq n\). The open immersion
\(X_j\times\mathbb A^1\hookrightarrow X_j\times\mathbb P^1\) is
birational and therefore invertible in \(L_n\). It follows that
\(X_j\times\mathbb A^1\to X_j\) is invertible in \(L_n\). Iterating over
\(j\) proves that \(p_{X,r}\) is invertible.

If \(d+r=n\), then
\[
  \Delta_n(X\times\mathbb A^r)=\mathbb Z,
  \qquad
  \Delta_n(X)=0.
\]
Since \(\Delta_n\) factors through \(L_n\), the two objects cannot be
isomorphic there. Hence \(p_{X,r}\) is not invertible.
\end{proof}

\begin{remark}\label{rem:height}
Equivalently, among integers \(N\geq d+r\), the smallest \(N\) for which
\(X\times\mathbb A^r\to X\) becomes invertible in \(L_N\) is
\(N=d+r+1\). The same statement holds with \(\mathbb A^1\) replaced by
\(\mathbb P^1\) when \(r=1\).
\end{remark}

\section{Failure of faithfulness}

We now prove the negative part of Theorem \ref{thm:main} in a form which
exhibits infinitely many morphisms in one fibre.

\begin{theorem}\label{thm:infinite-fibre}
Let \(n\geq1\), and let \(N\) satisfy \(n+1\leq N\leq\infty\). For
\(m\geq1\), define
\[
  \mu_m\colon\mathbb A^n\longrightarrow\mathbb A^n,
  \qquad
  (x_1,\ldots,x_n)\longmapsto
  (x_1,\ldots,x_{n-1},x_n^m).
\]
The morphisms \(\mu_m\) are pairwise distinct in \(L_n\), but all have the
same image, namely the identity, in \(L_N\). Consequently
\(\rho_{n,N}\) is not faithful and has an infinite fibre on
\(\End(\mathbb A^n)\).
\end{theorem}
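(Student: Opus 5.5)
Distinctness in \(L_n\) comes from Corollary~\ref{cor:degree-separates}. Each \(\mu_m\) is a dominant endomorphism of the smooth \(n\)-fold \(\mathbb A^n\). Its generic degree is
\[
  [F(x_1,\ldots,x_n):F(x_1,\ldots,x_{n-1},x_n^m)]=m,
\]
since \(x_n\) satisfies \(T^m-x_n^m\) over the subfield, and this polynomial is irreducible by Eisenstein or because \(x_n^m\) is transcendental. This holds in every characteristic, because we count the total (not separable) degree. So \(\overline\Delta_n(\mu_m)\) is multiplication by \(m\), and these are pairwise different endomorphisms of \(\mathbb Z\). Hence the \(\mu_m\) are pairwise distinct in \(L_n\).

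For the collapse in \(L_N\), I first reduce to \(N=n+1\): the functor \(\rho_{n,N}\) factors through \(\rho_{n+1,N}\circ\rho_{n,n+1}\), with \(L_\infty\) handled the same way. Now write \(\mathbb A^n=X\times\mathbb A^1\) with \(X=\mathbb A^{n-1}\), and let \(p\colon X\times\mathbb A^1\to X\) be the projection. Here \(\dim X+1=n\leq (n+1)-1\), so Theorem~\ref{thm:threshold} applied with \(d=n-1\), \(r=1\) and ambient index \(n+1\) shows that \(p\) is an isomorphism in \(L_{n+1}\). The case \(n=1\), \(X=\Spec F\), is allowed, since \(\Spec F\) is a smooth variety of dimension \(0\).

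Every \(\mu_m\) satisfies \(p\circ\mu_m=p\), because \(\mu_m\) does not change the first \(n-1\) coordinates. In \(L_{n+1}\) the morphism \(p\) is invertible, so we may cancel it to get \(\mu_m=\id\) there. The same then holds in \(L_N\) for every \(N\geq n+1\).

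Consequently all the \(\mu_m\), for \(m\geq1\), lie in the fibre over \(\id\) of \(\End_{L_n}(\mathbb A^n)\to\End_{L_N}(\mathbb A^n)\), and they are pairwise distinct in \(L_n\). The fibre is therefore infinite and \(\rho_{n,N}\) is not faithful. Given Theorem~\ref{thm:threshold}, no step is a real obstacle. The only points needing care are that the total degree is \(m\) even when the characteristic divides \(m\), where \(\mu_m\) is purely inseparable but still has degree \(m\), and that the index bookkeeping in Theorem~\ref{thm:threshold} is correct.
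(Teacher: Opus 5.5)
Your proposal is correct and follows the paper's proof essentially step for step. You show that \(\mu_m\) has generic degree \(m\) via the Eisenstein argument, valid in every characteristic, and get distinctness in \(L_n\) from Corollary~\ref{cor:degree-separates}; you then use Theorem~\ref{thm:threshold} to invert the projection \(\mathbb A^n\to\mathbb A^{n-1}\) in \(L_{n+1}\), cancel it in \(p\mu_m=p\), and push the equality \(\mu_m=\id\) forward to \(L_N\). (Your alternative justification ``because \(x_n^m\) is transcendental'' is too terse to stand alone, but the Eisenstein argument already suffices.)
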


\begin{proof}
The generic degree of \(\mu_m\) is \(m\). Indeed, after adjoining the
first \(n-1\) coordinates, put \(K=F(x_1,\ldots,x_{n-1})\) and
\(u=x_n^m\). The polynomial \(T^m-u\in K[u][T]\) is Eisenstein at
\(u\), so \([K(x_n):K(u)]=m\). Thus the degree statement remains
valid in positive characteristic, including when the extension is
inseparable. Corollary
\ref{cor:degree-separates} therefore shows that the \(\mu_m\) are pairwise
distinct in \(L_n\).

Let \(p\colon\mathbb A^n\to\mathbb A^{n-1}\) be the projection onto the
first \(n-1\) coordinates, with \(\mathbb A^0=\Spec F\). Theorem
\ref{thm:threshold}, applied in \(L_{n+1}\), shows that \(p\) is an
isomorphism. Since \(p\mu_m=p\), we obtain \(\mu_m=\id_{\mathbb A^n}\) in \(L_{n+1}\).

The same equality holds after applying any transition functor
\(L_{n+1}\to L_N\).
\end{proof}

\begin{remark}\label{rem:general-affine}
The affine space \(\mathbb A^{n-1}\) is not essential here. If \(Z\) is any
smooth \((n-1)\)-fold, then the endomorphisms
\[
  \id_Z\times(t\longmapsto t^m)\colon
  Z\times\mathbb A^1\longrightarrow Z\times\mathbb A^1,
  \qquad m\geq1,
\]
are pairwise distinct in \(L_n\) and all become the identity in
\(L_{n+1}\). The proof is identical: \(\Delta_n\) records the generic
degree \(m\), while the projection \(Z\times\mathbb A^1\to Z\) is
invertible in \(L_{n+1}\).
\end{remark}

There is also an immediate additive strengthening. If \(R\) is a
commutative ring and \(\mathcal C\) is a category, write \(R[\mathcal C]\)
for the free \(R\)-linear category on \(\mathcal C\).

\begin{corollary}\label{cor:linear}
Let \(R\neq0\) be a commutative ring. For \(n\geq1\) and
\(N\geq n+1\), the kernel of
\[
  \End_{R[L_n]}(\mathbb A^n)\longrightarrow
  \End_{R[L_N]}(\mathbb A^n)
\]
contains a free \(R\)-submodule of countable rank, generated by
\([\mu_m]-[\mu_1]\) for \(m\geq2\).
\end{corollary}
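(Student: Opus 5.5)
The plan is to run the degree functor of Lemma~\ref{lem:degree} through the free \(R\)-linear construction, and then use Theorem~\ref{thm:infinite-fibre} for the kernel.

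First, the kernel containment is immediate. By Theorem~\ref{thm:infinite-fibre}, every \(\mu_m\) maps to \(\id_{\mathbb A^n}\) in \(L_N\). Hence each element \([\mu_m]-[\mu_1]\) maps to \([\id]-[\id]=0\) in \(\End_{R[L_N]}(\mathbb A^n)\).

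Second, we reduce freeness to an injectivity statement. Recall that \(\End_{R[L_n]}(\mathbb A^n)\) is, by construction, the free \(R\)-module on the set \(\End_{L_n}(\mathbb A^n)\). Theorem~\ref{thm:infinite-fibre} shows that the \(\mu_m\) are pairwise distinct elements of this set. So \([\mu_1],[\mu_2],\dots\) are distinct basis vectors of a free module. It therefore suffices to check that the elements \([\mu_m]-[\mu_1]\), \(m\geq2\), are \(R\)-linearly independent.

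Third, we check that independence. Suppose \(\sum_{m\geq2} r_m([\mu_m]-[\mu_1])=0\) is a finite relation. Rewrite it as
\[
\sum_{m\geq2} r_m[\mu_m]-\Bigl(\sum_{m\geq2} r_m\Bigr)[\mu_1]=0.
\]
Since the \([\mu_m]\) for \(m\geq2\) are distinct basis vectors, each different from \([\mu_1]\), comparing coefficients gives \(r_m=0\) for all \(m\geq2\). Thus these elements span a free submodule of countable rank, and it lies in the kernel by the first step.

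The only real input is the distinctness of the \(\mu_m\) in \(L_n\). This is exactly where the generic degree enters, via Corollary~\ref{cor:degree-separates} and the Eisenstein computation in the proof of Theorem~\ref{thm:infinite-fibre}. So there is no substantial obstacle here. One could add the alternative check that \(\overline\Delta_n\) extends \(R\)-linearly to \(R[L_n]\to R\text{-}\mathrm{Mod}\), sending \([\mu_m]\) to multiplication by \(m\). But this only detects images in \(R\) and could fail to separate when \(R\) has torsion. The basis argument above avoids that issue, and we will use it.
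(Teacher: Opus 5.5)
Your proof is correct and follows the paper's argument. The $[\mu_m]$ are pairwise distinct basis vectors of the free module $\End_{R[L_n]}(\mathbb A^n)$ by Theorem~\ref{thm:infinite-fibre}, and all of them map to $[\id]$ in $R[L_N]$; you simply spell out the linear independence of the differences $[\mu_m]-[\mu_1]$, which the paper leaves implicit.
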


\begin{proof}
The \([\mu_m]\) are distinct basis vectors in the free \(R\)-module
\(\End_{R[L_n]}(\mathbb A^n)\), while Theorem
\ref{thm:infinite-fibre} identifies all of them in \(R[L_N]\).
\end{proof}

It remains to treat dimension zero. 

\begin{proposition}\label{prop:zero}
The functor \(\rho_{0,\infty}\colon L_0\to L_\infty\) is fully faithful.
\end{proposition}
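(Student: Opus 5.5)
To prove Proposition~\ref{prop:zero}, I would compare both categories with the category of finite separable field extensions of \(F\), using a retraction built from generic points and rational points. Objects of \(d_{\leq0}\Sm(F)\) are \(\Spec E\) with \(E/F\) finite separable, since smooth and integral of dimension zero forces this. A birational morphism between such objects is an isomorphism. Hence \(L_0\) is just \(d_{\leq0}\Sm(F)\), i.e.\ the opposite of the category of finite separable extensions \(E/F\). The task is therefore to show that for finite separable \(E,E'\) the map
\[
  \Hom_F(\Spec E,\Spec E')\longrightarrow\Hom_{L_\infty}(\Spec E,\Spec E')
\]
is bijective.

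The key tool is a functor \(L_\infty\to\) (sets) detecting morphisms into \(\Spec E'\). I would use the functor
\[
  \Phi\colon \Sm(F)\to\mathrm{Sets}, \qquad \Phi(X)=\Hom_F(\Spec F(X),\Spec E'),
\]
that is, \(F\)-embeddings of \(E'\) into \(F(X)\). This is not covariantly functorial for arbitrary morphisms, because non-dominant maps do not induce maps of function fields. So I would instead use the contravariant functor \(X\mapsto \Hom_F(X,\Spec E')\).

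Since \(E'/F\) is finite separable and \(X\) is normal and integral, a morphism \(X\to\Spec E'\) is the same as an \(F\)-embedding \(E'\hookrightarrow\mathcal O(X)\). Such an embedding lands in the algebraic closure of \(F\) in \(F(X)\), and every element of that closure is integral over \(F\) and hence regular on the normal scheme \(X\). It follows that
\[
  \Hom_F(X,\Spec E')=\Hom_F(E',F(X))=\Hom_F(\Spec F(X),\Spec E').
\]
This set depends only on \(F(X)\), so a birational \(s\colon X'\to X\) induces a bijection on it.

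Therefore the representable presheaf \(h_{E'}=\Hom_{\Sm(F)}(-,\Spec E')\) inverts \(S_b\). By the universal property of the localisation, \(h_{E'}\) extends to a presheaf on \(L_\infty\). I would then show that this extension is the representable \(\Hom_{L_\infty}(-,\Spec E')\). This amounts to checking that \(\Spec E'\) is \(S_b\)-local, i.e.\ that
\[
  \Hom_{L_\infty}(X,\Spec E')=\Hom_{\Sm(F)}(X,\Spec E')
\]
for all \(X\).

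The standard Gabriel--Zisman fact is that an object \(T\) whose representable presheaf inverts \(S\) satisfies \(\Hom_{S^{-1}\mathcal C}(X,T)=\Hom_{\mathcal C}(X,T)\). I would prove it directly. A zigzag representing a morphism \(X\to\Spec E'\) in \(L_\infty\) is a composite of morphisms and formal inverses of elements of \(S_b\). Since \(h_{E'}\) inverts \(S_b\), I can define a map from zigzags to \(h_{E'}(X)\) by applying \(h_{E'}\) step by step to the element \(\id\in h_{E'}(\Spec E')\). This map respects the Gabriel--Zisman relations, and it gives the inverse of the obvious map \(\Hom_{\Sm(F)}(X,\Spec E')\to\Hom_{L_\infty}(X,\Spec E')\). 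Equivalently, one can apply Yoneda in the presheaf category on \(L_\infty\).

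Specialising to \(X=\Spec E\) gives
\[
  \Hom_{L_\infty}(\Spec E,\Spec E')=\Hom_F(\Spec E,\Spec E')=\Hom_{L_0}(\Spec E,\Spec E'),
\]
and the bijection is compatible with \(\rho_{0,\infty}\). This proves full faithfulness.

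The only real obstacle I expect is verifying that \(h_{E'}\) inverts birational morphisms. This is where normality is needed: constants algebraic over \(F\) in \(F(X)\) must be regular functions on \(X\). It holds because \(X\) is smooth, hence normal, so \(\mathcal O(X)\) is integrally closed in \(F(X)\).
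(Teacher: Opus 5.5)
Your proof is correct, but it takes a different route from the paper's.

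\textbf{The paper's route.} Like you, the paper first observes that \(L_0=d_{\leq0}\Sm(F)\). It then invokes the Kahn--Sujatha Hom formula \(\Hom_{L_\infty}(U,V)\simeq V(F(U))/R\) for smooth proper targets \(V\) (Theorem 6.6.3 of their paper). It finishes by checking that \(R\)-equivalence is trivial on \(V(K)\) when \(V=\Spec E\) is zero-dimensional: a map from an open subset of \(\mathbb P^1_K\) to \(V_K\) extends to \(\mathbb P^1_K\), and is then constant.

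\textbf{Your route.} You avoid that formula and prove directly that \(\Spec E'\) is an \(S_b\)-local object. Normality of smooth varieties gives \(\Hom_F(X,\Spec E')=\Hom_F(E',F(X))\), which is visibly invariant under birational morphisms. The standard Yoneda argument for local objects then identifies \(\Hom_{L_\infty}(X,\Spec E')\) with \(\Hom_F(X,\Spec E')\).

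\textbf{What each approach buys.} Your argument is self-contained and does not rely on the comparatively deep Hom formula. It also proves more:
\begin{enumerate}
\item the identification holds for an arbitrary source \(X\in\Sm(F)\), not just zero-dimensional ones;
\item it works verbatim in every \(L_N\) and in the proper and projective variants;
\item it never uses properness of the target. What matters is only that morphisms into a zero-dimensional target are determined by the function field of the source.
\end{enumerate}
The paper's argument is shorter once the citation is granted, and it places the statement inside the \(R\)-equivalence description of \(L_\infty\).
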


\begin{proof}
Every object of \(d_{\leq0}\Sm(F)\) is \(\Spec K\) for a finite separable
extension \(K/F\), and every birational morphism between such objects is
an isomorphism. Hence \(L_0=d_{\leq0}\Sm(F)\).

Let \(U=\Spec K\) and \(V=\Spec E\) be two such objects. The target \(V\)
is smooth and proper. The Hom formula of Kahn and Sujatha
\cite[Theorem 6.6.3]{KahnSujatha} gives
\[
  \Hom_{L_\infty}(U,V)\simeq V(K)/R, 
\]
where the quotient on the right is taken with respect to the classical
\(R\)-equivalence of Manin \cite[Ch.~II, \S14]{Manin}; see also \cite[Definition 6.6.1]{KahnSujatha}.
The relation \(R\) is equality on \(V(K)\). Indeed, a rational map
\(\mathbb P^1_K\dashrightarrow V_K\), defined at two prescribed points,
extends uniquely to a morphism because \(\mathbb P^1_K\) is a regular
curve and \(V_K\) is proper. More explicitly, writing \(V_K=\Spec A\),
such a morphism is equivalent to
a \(K\)-algebra homomorphism
\(A\to\Gamma(\mathbb P^1_K,\mathcal O)=K\), and hence factors through a
single \(K\)-point of \(V_K\). Thus every direct \(R\)-equivalence is
trivial, and therefore the equivalence relation generated by direct
\(R\)-equivalence is equality. Consequently,
\[
  V(K)/R=V(K)=\Hom_F(\Spec K,\Spec E).
\]
This identification is the map induced by \(L_0\to L_\infty\), proving
full faithfulness.
\end{proof}

Theorem \ref{thm:main} follows from Theorem \ref{thm:infinite-fibre} and
Proposition \ref{prop:zero}.

\section{Proper and projective variants}

Let \(L_n^{\mathrm{prop}}\), respectively \(L_n^{\mathrm{proj}}\), denote
the birational localisation of the full subcategory of smooth proper,
respectively smooth projective, varieties of dimension at most \(n\). We
also use the evident notation \(L_\infty^{\mathrm{prop}}\) and
\(L_\infty^{\mathrm{proj}}\).

\begin{theorem}\label{thm:proper}
Let \(n\geq1\) and \(n+1\leq N\leq\infty\). The transition functors
\[
  L_n^{\mathrm{prop}}\longrightarrow L_N^{\mathrm{prop}},
  \qquad
  L_n^{\mathrm{proj}}\longrightarrow L_N^{\mathrm{proj}}
\]
are not faithful and have infinite fibres on suitable endomorphism sets.
More precisely, on
\(Y=\mathbb P^{n-1}\times\mathbb P^1\), the endomorphisms
\[
  \nu_m=\id_{\mathbb P^{n-1}}\times
  \bigl([u:v]\longmapsto[u^m:v^m]\bigr),
  \qquad m\geq1,
\]
are pairwise distinct in \(L_n^{\mathrm{proj}}\) and in
\(L_n^{\mathrm{prop}}\), but become equal to \(\id_Y\) in dimension
\(n+1\), hence in dimension \(N\).
\end{theorem}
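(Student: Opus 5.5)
We prove the two halves of the statement separately, mirroring Theorem~\ref{thm:infinite-fibre}: a degree invariant separates the \(\nu_m\) in dimension \(n\), and Lemma~\ref{lem:P1} collapses them in dimension \(n+1\).

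\emph{Distinctness in dimension \(n\).} The degree detector here is the top-dimensional Chow group, replacing \(\Delta_n\). On the category of smooth proper (resp.\ projective) varieties of dimension at most \(n\), define \(\Delta_n^{\mathrm{prop}}(X)=\CH_n(X)\). This is \(\mathbb Z\cdot[X]\) if \(\dim X=n\) and \(0\) otherwise. A morphism acts by proper pushforward \(f_*\). For \(f\colon X\to Y\) between \(n\)-folds this is multiplication by the generic degree \([F(X):F(Y)]\) if \(f\) is dominant, and \(0\) otherwise. Functoriality follows from functoriality of proper pushforward, so only properness is needed, and it holds in both subcategories. A birational morphism has degree one, so it is sent to an isomorphism. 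By the universal property, the functor factors through \(L_n^{\mathrm{prop}}\), and by restriction through \(L_n^{\mathrm{proj}}\), exactly as in Lemma~\ref{lem:degree}.

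The generic degree of \(\nu_m\) is the degree of \(t\mapsto t^m\) on \(\mathbb P^1\) in the affine coordinate \(t=u/v\), pulled back over \(F(\mathbb P^{n-1})\). The Eisenstein argument from the proof of Theorem~\ref{thm:infinite-fibre} gives degree exactly \(m\) in every characteristic. Hence the \(\nu_m\) act on \(\CH_n(Y)=\mathbb Z\) by pairwise different scalars, so they are pairwise distinct in \(L_n^{\mathrm{proj}}\) and \(L_n^{\mathrm{prop}}\).

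\emph{Collapse in dimension \(n+1\).} Let \(q\colon Y=\mathbb P^{n-1}\times\mathbb P^1\to\mathbb P^{n-1}\) be the projection. Since \(\mathbb P^{n-1}\) is projective of dimension \(d=n-1\), the projective (and proper) version of Lemma~\ref{lem:P1} makes \(q\) an isomorphism in the localisation of smooth projective (resp.\ proper) varieties of dimension at most \(d+2=n+1\). Since \(q\nu_m=q\), cancelling \(q\) gives \(\nu_m=\id_Y\) there. Applying the transition functor to dimension \(N\) preserves this equality, so all \(\nu_m\) lie in one fibre, which is infinite and shows non-faithfulness.

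\emph{Main obstacle.} The step needing care is checking that Lemma~\ref{lem:P1} really applies inside the proper and projective subcategories. Every auxiliary object in its proof must be smooth projective of dimension at most \(n+1\): the objects \(X\times W\), \(X\times\mathbb P^1\times\mathbb P^1\), \(X\times\mathbb P^2\), \(X\times E\), and \(X\times\{M_1\}\). With \(X=\mathbb P^{n-1}\) this holds, since \(W\) is a blow-up of \(\mathbb P^2\) and hence projective, and the largest of these has dimension \((n-1)+2=n+1\).

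A secondary check is the case \(n=1\). Then \(\mathbb P^0=\Spec F\), and the collapse is just the surface argument of Remark~\ref{rem:earlier-announcement}.
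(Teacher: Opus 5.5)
Your proposal is correct and takes essentially the same route as the paper. The generic degree \(m\) of \(\nu_m\) separates the \(\nu_m\) in dimension \(n\); you realise the detector as \(\CH_n\) rather than restricting \(\Delta_n\), which is exactly the identification the paper makes in Remark~\ref{rem:chow}. The proper/projective part of Lemma~\ref{lem:P1} with \(X=\mathbb P^{n-1}\) then forces \(\nu_m=\id_Y\) in dimension \(n+1\), and your check that all auxiliary objects in that lemma stay smooth projective of dimension at most \(n+1\) spells out what the paper asserts in one line.
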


\begin{proof}
The morphism \(\nu_m\) has generic degree \(m\), so the restriction of
\(\Delta_n\) separates the \(\nu_m\) in both dimension-\(n\)
localisations. By the proper (resp. projective) part of Lemma \ref{lem:P1}, the
projection
\[
  Y=\mathbb P^{n-1}\times\mathbb P^1
  \longrightarrow\mathbb P^{n-1}
\]
is invertible in both dimension-\(n+1\) localisations. It equalises all
\(\nu_m\), and therefore each \(\nu_m\) becomes the identity, exactly as
in the proof of Theorem \ref{thm:infinite-fibre}.
\end{proof}

\begin{remark}\label{rem:general-proper}
The same argument works with \(\mathbb P^{n-1}\) replaced by any smooth
proper, respectively smooth projective, \((n-1)\)-fold \(Z\): the maps
\[
  \id_Z\times\bigl([u:v]\longmapsto[u^m:v^m]\bigr)
\]
on \(Z\times\mathbb P^1\) are pairwise distinct in
\(L_n^{\mathrm{prop}}\), respectively \(L_n^{\mathrm{proj}}\), and all
become the identity in the corresponding localisation at level \(n+1\).
\end{remark}

\begin{remark}\label{rem:chow}
On smooth proper varieties of dimension at most \(n\), the ''detector''
\(\Delta_n\) is nothing more than \(\CH_n(-)\). Indeed,
\(\CH_n(X)=\mathbb Z[X]\) for an integral proper \(n\)-fold and is zero
below dimension \(n\); proper push-forward multiplies the fundamental
class by the generic degree. Thus the proper obstruction is the most
basic possible Chow-theoretic one.
\end{remark}

\end{document}